\newtheorem{theorem}{Theorem}
\newtheorem{lemma}[theorem]{Lemma}
\theoremstyle{definition}
\theoremstyle{remark}
\numberwithin{equation}{section}
\newcommand{\D}{\mathbb{D}}
\newcommand{\DD}{\widehat{\mathcal{D}}}
\newcommand{\Dd}{\widecheck{\mathcal{D}}}
\newcommand{\DDD}{\mathcal{D}}
\newcommand{\RR}{\mathcal{R}}
\newcommand{\C}{\mathbb{C}}
\renewcommand{\phi}{\varphi}
\newcommand{\whw}{\widehat{\omega}}
\def\a{\alpha}               
     \def\om{\omega}      
                  \def\z{\zeta}
\renewcommand{\H}{\mathcal{H}}
\begin{document}
\title[Schatten Class Hankel Operators on Weighted Bergman Spaces]{Schatten Class Hankel Operators on Weighted Bergman Spaces induced by regular weights}

\today

\thanks{The second author is indebted to University of Eastern Finland for the financial support.}

\begin{abstract}
In this paper, for $1\leq p<\infty$, we provide several descriptions of Schatten $p$-class Hankel operators $H_f$ and $H_{\overline{f}}$ on the weighted Bergman space $A^2_\om$, in terms of a certain global and local mean oscillation of the symbol $f\in L^2_\om$, provided $\om$ is in a class of regular weights. The approaches applied to rely on several classical methods, and simultaneously rely on a novel but more convenient construction associated with the atomic decomposition of $A^2_\om$.
\end{abstract}

\keywords{Schatten class, Weighted Bergman space, Regular weight, Mean oscillation.}

\subjclass[2010]{Primary 30H20, 47B35}

\author[Hamzeh Keshavarzi]{Hamzeh Keshavarzi}
\address{Department of Mathematics, College of Sciences, Shiraz University, Shiraz, Iran}
\email{amzehkeshavarzi67@gmail.com}

\author[Fanglei Wu]{Fanglei Wu}
\address{University of Eastern Finland, P.O.Box 111, 80101 Joensuu, Finland}
\email{fanglei.wu@uef.fi}
\email{fangleiwu1992@gmail.com}

\maketitle

\section{Introduction and main results}

Let $\H(\D)$  denote the space of analytic functions in the unit disc $\D=\{z\in\C:|z|<1\}$. A non-negative function $\om\in L^1(\D)$ such that $\om(z)=\om(|z|)$ for all $z\in\D$ is called a radial weight. For $0<p<\infty$ and such an $\omega$, the Lebesgue space $L^p_\om$ consists of complex-valued measurable functions $f$ on $\D$ such that
    $$
    \|f\|_{L^p_\omega}^p=\int_\D|f(z)|^p\omega(z)\,dA(z)<\infty,
    $$
where $dA(z)=\frac{dx\,dy}{\pi}$ is the normalized area measure on $\D$. The corresponding weighted Bergman space is $A^p_\om=L^p_\omega\cap\H(\D)$. As usual, we write $A^p_\eta$ for the classical weighted Bergman spaces induced by the
standard weight $\om(z)=(\eta+1)(1-|z|^2)^\eta$ with $-1<\eta<\infty$.

For a radial weight $\om$, the norm convergence in $A^2_\om$ implies the uniform convergence on compact subsets, and therefore the Hilbert space $A^2_\om$ is a closed subspace of $L^2_\om$ and the orthogonal Bergman projection $P_\om:L^2_\om\to A^2_\om$ is given by
	$$	
	P_\om(g)(z)=\int_\D g(z)\overline{B^\om_z(\z)}\om(\z)\,dA(\z),\quad z\in\D,
	$$
where $B^\om_z$ is the reproducing kernel of $A^2_\om$, associated with the point $z\in\D$. In the case of a standard weight, the Bergman reproducing kernels are given by the neat
formula $(1-\overline{z}\z)^{-(2+\eta)}$. For the convenience, throughout the paper we assume $K^\eta_z(\z)=(1-\overline{z}\z)^{-\eta}$, and hence $K^{\eta+2}_z(\z)$ is the kernel of $A^2_\eta$. The most commonly known result on the Bergman projection is due to Bekoll\'e and Bonami \cite{bekolle1,bekolle2}, see \cite{pelaez2,PelRat2020} for recent results and the reference therein.

A compact operator $T$ from a Hilbert space $H$ to another Hilbert space $K$ is said to belong to the Schatten class $S_p=S_p(H, K)$ if its sequence of singular numbers $\{\lambda_n\}$ belongs to
$\ell_p$ ($0<p<\infty$). It is well-known that for $1\leq p<\infty$, the class $S_p$ is a Banach space with the norm $\|T\|_{S_p}=(\sum_j |\lambda_j|^p)^{1/p}$. Moreover, $S_p$ is closed under the product of operators, in other words, if  $T\in S_p$, $A$ is a bounded operator on $H$ and $B$ a bounded operator on $K$, then $BTA\in S_p$. See \cite{zhu2} for more basic information about the Schatten class.

One important linear operator related to the Bergman projection $P_\om$ is known as the (big) Hankel operator:
$$
H_f(g)(z)=(I-P_\om)(fg)(z),\quad f\in L^1_\om,~~z\in\D.
$$
The study of the Schatten class Hankel operators $H_f$ and $H_{\overline{f}}$ on Hilbert-weighted Bergman spaces is a compelling topic that has attracted considerable attention during the last decades. When the symbol $f$ is analytic, we refer to \cite{abate1,arazy,Janson1988,Wallsten1990,Zhu1990} for study of the Schatten class $H_f$ and $H_{\overline{f}}$ on standard weighted Bergman spaces $A^2_\eta$ in the unit ball $\mathbb{B}_n$ of $\C^n$. When $f$ is considered to be a general integrable function, for the case $2\leq p<\infty$, Zhu \cite{zhu3} characterized the Schatten class $H_f$ and $H_{\overline{f}}$ simultaneously on $A^2_\eta$ in $\mathbb{B}_n$ in terms of a certain local mean oscillation of the symbol $f$ associated with Bergman metric; using the same characterization, Xia \cite{xia1,xia2} and Isralowitz \cite{isralowitz} solved the question for the case $\max\{1, \frac{2n}{n+1+\eta}\}<p\leq2$ and $\frac{2n}{n+1+\eta}<p\leq1$ respectively; and finally Pau \cite{pau2} closed the full case $0<p<\infty$ in terms of the same local mean oscillation. Apart from such local mean oscillation, another tool concerning the problem is known as the so-called (global) mean oscillation related to a Berezin-type transform of the symbol $f$. As a matter of fact, from the aforementioned literature, these two characterizations of $H_f, H_{\overline{f}}\in S_p$ are equivalent.

The purpose of the paper is to characterize Schatten class $H_f$ and $H_{\overline{f}}$ on $A^2_\om$, provided $\om$ is a class of certain locally smooth weights. We now proceed toward the exact statements via necessary definitions. Throughout this paper we assume $\widehat{\om}(z)=\int_{|z|}^1\om(s)\,ds>0$ for all $z\in\D$, for otherwise $A^p_\om=\H(\D)$. A weight $\om$ belongs to the class~$\DD$ if there exists a constant $C=C(\om)\ge1$ such that $\widehat{\om}(r)\le C\widehat{\om}(\frac{1+r}{2})$ for all $0\le r<1$. Moreover, if there exist $K=K(\om)>1$ and $C=C(\om)>1$ such that $\widehat{\om}(r)\ge C\widehat{\om}\left(1-\frac{1-r}{K}\right)$ for all $0\le r<1$, then we write $\om\in\Dd$. In other words, $\om\in\Dd$ if there exists $K=K(\om)>1$ and $C'=C'(\om)>0$ such that
	\begin{equation}\label{Dcheck}
	   \widehat{\om}(r)\le C'\int_r^{1-\frac{1-r}{K}}\om(t)\,dt,\quad 0\le r<1.
	\end{equation}
The intersection $\DD\cap\Dd$ is denoted by $\DDD$. The class $\RR\subset\DDD$ of regular weights consists of those radial weights for which $\widehat{\om}(r)\asymp\omega(r)(1-r)$ for all $0\le r<1$. We immediately see that all standard weights belong to $\RR$. The true advantage of the class $\RR$ is the local smoothness of its weights. It is clear
that if $\om\in\RR$, then for each $s \in[0,1)$ there exists a constant $C=C(s, \omega)>1$ such that
$$
C^{-1} \omega(t) \leq \omega(r) \leq C \omega(t), \quad 0 \leq r \leq t \leq r+s(1-r)<1.
$$

It will turn out that the global mean oscillations defined by a certain Berezin-type transform and local mean oscillations related to the Bergman metric are both efficient tools for depicting $H_f, H_{\overline{f}}\in S_p$. Note that for any radial weight
$\om$, there exists a sufficiently large $\eta=\eta(\om)>0$ such that the inclusion $A^2_\om\subset A^2_{\eta}$, which makes $k^{\eta+2}_{\om,z}:={K^{\eta+2}_z}/{\|K^{\eta+2}_z\|_{A^2_\om}}$ well-defined. Now, for $g\in L^1_\om$ and such $\eta$, the Berezin-type transform  $B_{\omega,\eta}(g)$ is defined as
$$
B_{\omega,\eta}(g)(z)=\langle g k_{\om,z}^{\eta+2},k_{\om,z}^{\eta+2}\rangle_{L^2_\om}.
$$
Then we define
$$
MO_{\omega,\eta}(f)(z)=\left( B_{\omega,\eta}(|f|^2)(z)-|B_{\omega,\eta}(f)(z)|^2\right)^{\frac12},
$$
which, in some senses, can be treated as a certain global mean oscillations because
\begin{equation}\label{e1}
MO_{\omega,\eta}(f)(z)=\| f k_{\om,z}^{\eta+2}- B_{\omega,\eta}(f)(z) k_{\om,z}^{\eta+2}\|_{A^2_\om},
\end{equation}
and
\begin{equation}\label{e2}
MO_{\omega,\eta}(f)(z)=\left( \int_{\mathbb{D}}\int_{\mathbb{D}} |f(u)-f(\zeta)|^2 |k_{\om,z}^{\eta+2}(u)|^2  |k_{\om,z}^{\eta+2}(\zeta)|^2 \om(u)\om(\z)\,dA(u)\,dA(\zeta) \right)^{\frac12}.
\end{equation}

Before defining the local mean oscillation of a locally square integrable function on $\D$ in the Bergman metric, recall that the Bergman metric $\beta$ on $\D$ is defined by
\begin{equation*}
\beta(z,\zeta)= \dfrac{1}{2} \log \dfrac{1+|\varphi_z(\zeta)|}{1-|\varphi_z(\zeta)|}, \qquad z,w\in \mathbb{D},
\end{equation*}
where $\varphi_z$ is the automorphism of $\mathbb{D}$, i.e. $\varphi_z(\zeta)= \frac{z-\zeta}{1-\overline{z}\zeta}$. For a fixed $r>0$, the Bergman disc $D(z,r)$ centered at $z$ with the radius of $r$ is defined by $D(z,r)=\{\zeta\in \mathbb{D}: \beta(z,\zeta)<r\}.$ It is well-known that $D(z,r)$ is the Euclidean disc centered at $(1-\tanh^2{r})z/(1-\tanh^2{r}|z|^2)$ and of radius $(1-|z|^2)\tanh^2{r}/(1-\tanh^2{r}|z|^2)$. Then the local mean oscillation of $f\in L^2_\om$ in the Bergman metric is defined to be
$$
MO_{\om,r}(f)(z)=\left( \dfrac{1}{\omega(D(z,r))} \int_{D(z,r)} |f(\zeta)-\widehat{f_{\om,r}(z)}|^2 \om(\z)\,dA(\zeta) \right)^{1/2},
$$
where $\om(D(z,r))=\int_{D(z,r)}\om\,dA$ and the averaging function $\widehat{f_{\om,r}}$ is given by
$$
\widehat{f_{\om,r}}(z)= \dfrac{1}{\omega(D(z,r))} \int_{D(z,r)} f(\zeta)\om(\z)\, dA(\zeta).
$$
It is easy to check that for any $z\in\D$ and $r>0$, one has
\begin{equation}\label{e3}
MO_{\om,r}(f)(z)=\left(\dfrac{1}{2\omega(D(z,r))^2} \int_{D(z,r)}\int_{D(z,r)} |f(u)-f(\zeta)|^2\om(u)\om(\z)\, dA(u)\, dA(\zeta) \right)^{\frac12}.
\end{equation}

Let $d\lambda$ be the M\"obius invariant measure on $\D$. That is, $d\lambda(z)=dA(z)/(1-|z|^2)^2$. Our main result can be stated as follows.
\begin{theorem}\label{maintheorem}
    Let $1\leq p<\infty$, $\omega\in\RR$, and $f\in L^2_\om$. Then the following statements are equivalent:
\begin{itemize}
\item[(i)] $H_f$ and $H_{\overline{f}}$ are in $S_p(A^2_\omega,  L^2_\om)$;
\item[(ii)] There exists an $\eta_0=\eta_0(\om)>0$ such that  $MO_{\omega,\eta}(f)\in L^p(\D,~d\lambda)$ for some (equivalently for all) $\eta\geq \eta_0$;
\item[(iii)] $MO_{\om,r}(f)\in  L^p(\D,~d\lambda)$ for some (equivalently for all) $r=r(\om)>0$.
\end{itemize}
\end{theorem}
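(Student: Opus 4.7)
The plan is to prove $(\text{ii})\Leftrightarrow(\text{iii})$ first and then close the chain via $(\text{iii})\Rightarrow(\text{i})\Rightarrow(\text{ii})$. The regularity $\omega\in\RR$ enters throughout via the pointwise kernel estimate $|k^{\eta+2}_{\omega,z}(\zeta)|^2\omega(\zeta)\asymp\omega(D(z,r))^{-1}$ for $\zeta\in D(z,r)$ (with tail decay in Bergman distance off $D(z,r)$) and via the atomic decomposition of $A^2_\omega$. The equivalence of the two oscillation notions then follows by comparing the double-integral representations (\ref{e2}) and (\ref{e3}): on $D(z,r)\times D(z,r)$ the kernel weight in (\ref{e2}) degenerates to $\omega\otimes\omega/\omega(D(z,r))^2$, yielding $MO_{\omega,r}(f)(z)\lesssim MO_{\omega,\eta}(f)(w)$ whenever $w\in D(z,r)$; the reverse estimate is obtained by splitting (\ref{e2}) along a Bergman lattice $\{z_k\}$ and exploiting the off-diagonal decay of $k^{\eta+2}_{\omega,z}$, then integrating against $d\lambda$ and discretizing.

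For $(\text{iii})\Rightarrow(\text{i})$, fix an $r$-separated Bergman lattice $\{z_k\}$ of bounded overlap and set $c_k:=\widehat{f_{\omega,r}}(z_k)$. Since $(I-P_\omega)$ annihilates $A^2_\omega$, one may locally replace $f$ by $f-c_k$ and write, schematically, $H_f\sim\sum_k(I-P_\omega)M_{(f-c_k)\chi_k}\circ\Pi_k$, with $\Pi_k$ a projection onto atoms near $z_k$ furnished by the atomic decomposition of $A^2_\omega$. The Schatten $p$-norm of each summand is controlled by $MO_{\omega,r}(f)(z_k)$ up to a $k$-independent constant, and the $\ell^p$-triangle inequality ($p\geq1$) combined with the discretization $\sum_k\varphi(z_k)\asymp\int_\D\varphi\,d\lambda$ for slowly varying $\varphi$ yields $\|H_f\|_{S_p}\lesssim\|MO_{\omega,r}(f)\|_{L^p(\D,d\lambda)}$, and analogously for $H_{\overline{f}}$.

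For $(\text{i})\Rightarrow(\text{ii})$, expanding $\|(I-P_\omega)(fk^{\eta+2}_{\omega,z})\|^2$ and using that $k^{\eta+2}_{\omega,z}$ has unit $A^2_\omega$-norm gives
\begin{equation*}
\|H_fk^{\eta+2}_{\omega,z}\|^2+\|H_{\overline{f}}k^{\eta+2}_{\omega,z}\|^2=2\,MO_{\omega,\eta}(f)(z)^2-\bigl(\|P_\omega(fk^{\eta+2}_{\omega,z})\|^2-|B_{\omega,\eta}(f)(z)|^2\bigr)-\bigl(\|P_\omega(\overline{f}k^{\eta+2}_{\omega,z})\|^2-|B_{\omega,\eta}(f)(z)|^2\bigr),
\end{equation*}
so the matter reduces to absorbing two nonnegative Toeplitz-oscillation residues. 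For $p\geq 2$ the Berezin-transform lower bound $\|S\|_{S_p}^p\gtrsim\int\langle S^*Sk_z,k_z\rangle^{p/2}\,d\lambda$, applied to the direct-sum operator $S=H_f\oplus H_{\overline{f}}$, closes the argument after absorbing the residues. The principal obstacle is the range $1\leq p<2$: the Berezin lower bound is quantitatively insufficient there, and one must instead pass to $S_p$--$S_{p'}$ duality, testing $MO_{\omega,\eta}(f)$ against $L^{p'}(\D,d\lambda)$ through trial operators of the form $\sum_k a_k(h_k\otimes k^{\eta+2}_{\omega,z_k})$ built from the atomic decomposition of $A^2_\omega$, whose Schatten $p'$-norm is comparable to $\|\{a_k\}\|_{\ell^{p'}}$; this is where the \emph{novel construction via atomic decomposition} advertised in the abstract is expected to enter as the technical heart of the proof.
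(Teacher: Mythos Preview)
Your logical chain is reversed relative to the paper, and this is not cosmetic: the atomic-decomposition device enters on the opposite side from where you place it. The paper proves $(\text{ii})\Rightarrow(\text{i})$, $(\text{ii})\Leftrightarrow(\text{iii})$, and $(\text{i})\Rightarrow(\text{iii})$. The ``novel construction'' advertised in the abstract lives in $(\text{ii})\Rightarrow(\text{i})$ and is much simpler than your duality scheme: one defines $A:A^2_\omega\to A^2_\omega$ by $Ae_j=k^{\eta+2}_{\omega,a_j}$ on an orthonormal basis $\{e_j\}$; boundedness and surjectivity of $A$ \emph{is} the atomic decomposition. Then $H_fA\in S_p$ suffices, and by Lemma~\ref{lemma3} together with \cite[Theorem~1.27, Proposition~1.31]{zhu2} this reduces to $\sum_j\|H_fk^{\eta+2}_{\omega,a_j}\|^p\lesssim\sum_jMO_{\omega,\eta}(f)(a_j)^p$, which is $(\text{ii})$ discretized. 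There is no $S_p$--$S_{p'}$ duality and no separate treatment of $1\le p<2$.

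The step you underestimate is necessity, handled in the paper as $(\text{i})\Rightarrow(\text{iii})$ via Isralowitz's commutator trick rather than any Berezin lower bound. One passes to $[M_f,P_\omega]=H_fP_\omega-(H_{\overline f}P_\omega)^*\in S_p(L^2_\omega)$, sandwiches it between bounded operators on $L^2_\omega$ built from $h_j=\omega(D_j)^{-1/2}\chi_{D_j}$, and obtains a positive $T$ with $\langle Te_j,e_j\rangle=\|\chi_{D_j}[M_f,P_\omega]h_j\|_{L^2_\omega}$. The crucial ingredient you are missing entirely is Lemma~\ref{lemma5}, which shows
\[
MO_{\omega,r}(f)(z)^2\lesssim\frac{1}{\omega(D(z,r))}\int_{D(z,r)}\Bigl|\int_{D(z,r)}(f(u)-f(\zeta))B^\omega_u(\zeta)\,\omega(\zeta)\,dA(\zeta)\Bigr|^2\omega(u)\,dA(u),
\]
and this is exactly where the hypothesis $\omega\in\RR$ (not merely $\DDD$) is consumed: one needs $\|B^\omega_z\|^2\asymp\omega(D(z,r))^{-1}$ for \emph{small} $r$ so that $B^\omega_u(\zeta)/\|B^\omega_z\|^2=1+O(r)$ uniformly on $D(z,r)\times D(z,r)$. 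Your sketch of $(\text{iii})\Rightarrow(\text{i})$ via $H_f\sim\sum_k(I-P_\omega)M_{(f-c_k)\chi_k}\Pi_k$ has a genuine gap: multiplication by $\chi_k$ destroys analyticity, so $(I-P_\omega)(c_k\chi_k g)\neq0$ and you cannot ``locally replace $f$ by $f-c_k$'' after cutting off. If instead you drop $\chi_k$ and subtract $c_k$ against the analytic atom $k^{\eta+2}_{\omega,z_k}$, the quantity you control is $\|(f-c_k)k^{\eta+2}_{\omega,z_k}\|_{L^2_\omega}$, which is comparable to $MO_{\omega,\eta}(f)(z_k)$, i.e.\ the \emph{global} oscillation $(\text{ii})$ rather than $(\text{iii})$---and that is precisely the paper's $(\text{ii})\Rightarrow(\text{i})$ route.
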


It is worth mentioning that (iii) in the theorem does actually imply the compactness of both $H_f$ and $H_{\overline{f}}$, provided $\om\in\RR$. Being precise, since $\om\in\RR$, it follows from \cite[(1.2)]{pelaez1} that for any fixed $r>0$ and for any $u, \z\in D(z,r)$,
$$
\om(u)\asymp\om(\z)\asymp\om(z),\quad \om(D(z,r))\asymp\whw(z)(1-|z|)\asymp\om(z)(1-|z|)^2,\quad z\in\D,
$$
and hence \eqref{e3} is comparable to $MO_{2,r}$ in \cite{hu}. Therefore (iii) implies $\lim_{|z|\to1^-} MO_{\om,r}(f)(z)=0$ and hence it follows from \cite[Theorem 4.5]{hu} with the special case $p=q$ that both $H_f$ and $H_{\overline{f}}$ are compact on $A^2_\om$. Alternatively, the similar proof of \cite[Theorem 1]{pelaez4} yields the compactness of both $H_f$ and $H_{\overline{f}}$ directly. Apparently, the result covers partially of \cite[Theorem 1]{pau2} in the case of $1\leq p<\infty$ and in the setting of the unit disc.

The proof of the theorem will be done by verifying that (i) $\Rightarrow$ (iii), (ii) $\Leftrightarrow$ (iii), and (ii) $\Rightarrow$ (i) respectively. The method used to prove that (i) implies (iii) is inherited from \cite{isralowitz}, which depends on an efficient estimate of the local mean oscillation of the symbol $f$. The proof of (ii) $\Leftrightarrow$ (iii) is proved with the aid of Lemma \ref{lemma2}, which can be set up by the approach used in \cite[Lemma 3.1]{pau2}. Meanwhile, we can prove the equivalence between (ii) and (iii) for the full case $0<p<\infty$ and for the involved weight $\om\in\DDD$. To prove that (ii) implies (i), for the full range $0<p<\infty$, instead of using some classical techniques, we construct a linear bounded operator on $A^2_\om$, which makes the proof easier and hence avoid lots of laborious calculations. To be more concrete, suppose $\{e_j\}$ is an orthonormal basis for $A^2_\om$. Note that if $\om\in\DDD$, then for a large enough $\eta=\eta(\om)$
\begin{equation}\label{kernel}
|k_{\om,z}^{\eta+2}(\zeta)|\asymp\frac{(1-|z|)^{\eta+3/2} \widehat{\omega}(z)^{-1/2}}{|1-\overline{z}\zeta|^{\eta+2}},\quad
z,\z\in\D.
\end{equation}
This together with the atomic decomposition (see \cite[Theorem 2]{Pelaez7} with the special case $p=q=2$) enable us to define a certain linear operator $A: A^2_\om\to A^2_\om$ satisfying $A e_j =k_{\om,a_j}^{\eta+2}$ and $A$ is bounded and onto, where $\{a_j\}$ is $r$-lattice of $\D$. Then an application of \cite[Theorem 1.27 and Proposition 1.31]{zhu2} gives everything we aim for.

A careful reader may have already realized that the proof of (i) $\Rightarrow$ (iii) is just dealt with in the case of $\om\in\RR$ instead of $\om\in\DDD$. However, a substantial obstacle
will appear in the proof if one tries to use a similar method for a general weight in $\DDD$. Indeed, if $\om$ is only assumed to belong to $\DDD$, then by \cite[Theorem 1]{pelaez2}, we  may find a large enough $r_0=r_0(\om)$ such that for all $r>r_0$,
\begin{equation}\label{eq:important}
\|B^\om_z\|^2_{A^2_\om}\asymp\om(D(z,r))^{-1}\asymp\frac{1}{\whw(z)(1-|z|)},\quad z\in\D.
\end{equation}
Nevertheless, we are not in a position to obtain the same estimate as in Lemma \ref{lemma5} because $r$ is supposed to be sufficiently small. This obstacle does not happen if $\om\in\RR$, since the last \eqref{eq:important} is valid for all $r>0$. Therefore, some new techniques should be developed in this case.

We finish the introduction by a couple of words about the notation used in this paper. Throughout the paper $\frac1p+\frac{1}{p'}=1$ for $1<p<\infty$. Further, the letter $C=C(\cdot)$ will denote an absolute constant whose value depends on the parameters indicated in the parenthesis, and which may change from one occurrence to another. If there exists a constant
$C=C(\cdot)>0$ such that $a\le Cb$, then it is written either $a\lesssim b$ or $b\gtrsim a$. In particular, if $a\lesssim b$ and
$a\gtrsim b$, then it is denoted by $a\asymp b$ and said that $a$ and $b$ are comparable.

\section{Auxiliary results}
In this section, we are going to present several auxiliary lemmas that are useful for proving Theorem \ref{maintheorem}. Some of them are proved not only for regular weights but for a wider class of weights.

We begin with a simple but important result for $\om\in\DDD$.

\begin{lemma}\label{eq:weight1}
Let $\om\in\DDD$. Then for any $c\geq0$ there exists an $\eta_0=\eta_0(\om,c)>0$ such that for all $\eta>\eta_0$
\begin{equation}
    \int_\D|K^\eta_z(\z)|\beta(z,\z)^c\om(\z)\,dA(\z)\lesssim\frac{\whw(z)}{(1-|z|)^{\eta-1}},\quad z\in\D.
\end{equation}
\end{lemma}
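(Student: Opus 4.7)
The plan is to absorb the Bergman–metric factor $\beta(z,\z)^c$ into an arbitrarily small negative power of $1-|\vp_z(\z)|$, and thereby reduce the integral to a polynomially-shifted version of the classical integral estimate for $\DDD$-weights. Starting from $\beta(z,\z)=\tfrac12\log\tfrac{1+|\vp_z(\z)|}{1-|\vp_z(\z)|}\lesssim 1+\log\tfrac{1}{1-|\vp_z(\z)|}$, the elementary inequality $(1+\log(1/t))^c\le C_{c,\delta}\,t^{-\delta}$ (valid for $t\in(0,1]$ and every $\delta>0$), and the identity $1-|\vp_z(\z)|^2=(1-|z|^2)(1-|\z|^2)/|1-\overline{z}\z|^2$ together with $1+|\vp_z(\z)|\in[1,2]$, I would first deduce the log–power bound
$$
\beta(z,\z)^c\;\lesssim\;\left(\frac{|1-\overline{z}\z|^{2}}{(1-|z|)(1-|\z|)}\right)^{\!\delta},\qquad \delta>0,
$$
with implicit constant depending on $c$ and $\delta$.

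Inserting this bound and pulling $(1-|z|)^{-\delta}$ outside dominates the integral in the statement by
$$
\frac{1}{(1-|z|)^\delta}\int_\D\frac{\om(\z)\,dA(\z)}{(1-|\z|)^\delta\,|1-\overline{z}\z|^{\eta-2\delta}}.
$$
It therefore suffices to establish the polynomially-shifted integral estimate
$$
\int_\D\frac{\om(\z)\,dA(\z)}{(1-|\z|)^\delta\,|1-\overline{z}\z|^{\eta-2\delta}}\;\lesssim\;\frac{\whw(z)}{(1-|z|)^{\eta-\delta-1}}
$$
for $\delta>0$ small and $\eta$ large, as the two factors then combine to the desired $\whw(z)/(1-|z|)^{\eta-1}$. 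By rotational invariance and the classical one-dimensional bound $\int_0^{2\pi}|1-rse^{i\theta}|^{-\mu}\,d\theta\asymp(1-rs)^{1-\mu}$ for $\mu>1$, this reduces to the radial estimate $\int_0^1\frac{s\,\om(s)\,ds}{(1-s)^\delta(1-|z|s)^{\eta-2\delta-1}}\lesssim\frac{\whw(z)}{(1-|z|)^{\eta-\delta-1}}$. The latter follows from the Peláez–Rättyä machinery for weights in $\DDD$: since $\om\in\Dd$, iterating the defining inequality yields the polynomial decay $\whw(1-t)\lesssim (t/(1-r))^{\alpha}\whw(r)$ for some $\alpha=\alpha(\om)>0$ and $0<t\le(1-r)/K$; choosing $\delta\in(0,\alpha)$ and splitting the $s$-integral at $s=1-(1-|z|)/K$ into a piece close to $|z|$ (handled by the local doubling of $\om$) and a tail near $1$ (where the polynomial decay of $\whw$ absorbs the singular factor $(1-s)^{-\delta}$) produces the bound.

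The main obstacle is precisely this last step, i.e., justifying the polynomially-shifted integral estimate. It is essential here that $\om\in\Dd$ rather than only $\om\in\DD$, since the polynomial decay of $\whw$ dictated by $\Dd$ is exactly what allows absorbing $(1-|\z|)^{-\delta}$. The threshold $\eta_0=\eta_0(\om,c)$ must therefore be taken large enough so that $\eta-2\delta$ is in the valid range, where $\delta=\delta(\om,c)$ has been fixed small relative to the $\Dd$-exponent $\alpha(\om)$; once these parameters are set, the remaining manipulations are routine.
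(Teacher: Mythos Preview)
Your approach is correct and essentially matches the paper's: both bound $\beta(z,\z)^c$ by a small power of $|1-\overline{z}\z|^2/((1-|z|)(1-|\z|))$ and thereby reduce to estimating $\int_\D |1-\overline{z}\z|^{-(\eta-2\delta)}(1-|\z|)^{-\delta}\om(\z)\,dA(\z)$. The only difference is packaging: rather than carrying out your radial splitting argument by hand, the paper observes that for $\delta$ small enough the shifted weight $\om(\z)(1-|\z|)^{-\delta}$ still lies in $\DD$ (this is where $\om\in\Dd$ enters, via \cite[Lemma~2.1]{pelaez5} and \cite[(2.27)]{PelRat2020}) and then invokes the Pel\'aez--R\"atty\"a kernel estimate \cite[Theorem~1]{pelaez2} as a black box.
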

\begin{proof}
    The definition of the Bergman distance implies that $\beta$ grows logarithmically, and hence for any $\varepsilon>0$, we have
    \begin{equation}\label{1}
        \beta(z,\z)\lesssim \left( \dfrac{(1-|z|)(1-|\zeta|)}{|1-\overline{z}\zeta|^2}\right)^{-\varepsilon},\quad z,\z\in\D.
    \end{equation}
Since $\om\in\DDD$, by  \cite[Lemma B]{PelRaSi2018}, we see that there exists a $\beta=\beta(\om)>0$ such that the function $\frac{\whw(r)}{(1-r)^\beta}$ is essentially decreasing on $(0,1)$. For a sufficiently small $\varepsilon=\varepsilon(\om,c)>0$ such that $c\varepsilon<\beta$, we have $\om_{[-c\varepsilon]}(z)=\om(z)(1-|z|)^{-c\varepsilon}\in\DD$. Indeed, on one hand, since $\om\in\DDD\subset\DD$,
$$
\widehat{\om_{[-c\varepsilon]}}(\frac{1+r}{2})=\int_{\frac{1+r}{2}}^1\frac{\om(t)}{(1-t)^{c\varepsilon}}\,dt\geq(1-\frac{1+r}{2})^{-c\varepsilon}\whw(\frac{1+r}{2})\asymp(1-r)^{-c\varepsilon}\whw(r),\quad 0<r<1.
$$
On the other hand,  using an integration by parts, the fact that $\om\in\DDD\subset\Dd$ and \cite[Lemma B]{PelRaSi2018}, we have
\begin{equation}\label{eq1}
    \begin{split}
     \widehat{\om_{[-c\varepsilon]}}(r)&=\int^1_{r}(1-t)^{-c\varepsilon} \omega(t)\,dt=-\int_{r}^1 (1-t)^{-c\varepsilon}\,d\whw(t)\\
     &=\whw(t)(1-t)^{-c\varepsilon}
     \bigg|^r_1+c\varepsilon\int^1_r\frac{\whw(t)}{(1-t)^{c\varepsilon+1}}\,dt\\
     &\lesssim
     \whw(r)(1-r)^{-c\varepsilon}+\frac{\whw(r)}{(1-r)^\beta}\int_r^1\frac{dt}{(1-t)^{c\varepsilon+1-\beta}}\\
     &\asymp\whw(r)(1-r)^{-c\varepsilon},\quad 0<r<1.
    \end{split}
\end{equation}
Therefore, $\om_{[-c\varepsilon]}(z)=\om(z)(1-|z|)^{-c\varepsilon}\in\DD$. Now, for any $\eta>\eta_0=\eta_0(\om,c)$ with  $\eta_0>2c\varepsilon+1$, \eqref{1}, \cite[Lemma 2.1 (vii)]{pelaez5} and \eqref{eq1} yield
\begin{equation*}
\begin{split}
    \int_\mathbb{D} |K_z^\eta(\zeta)| \beta(z,\zeta)^c \omega(\zeta) dA(\zeta)&= \int_\mathbb{D} \dfrac{1}{|1-\overline{z}\zeta|^\eta} \beta(z,\zeta)^c \omega(\zeta) dA(\zeta)\\
&\lesssim (1-|z|)^{-c\varepsilon} \int_\mathbb{D} \dfrac{1}{|1-\overline{z}\zeta|^{\eta-2c\varepsilon}} (1-|\zeta|)^{-c\varepsilon} \omega(\zeta) dA(\zeta)\\
&\asymp (1-|z|)^{c\varepsilon+1-\eta}\int^1_{|z|}(1-r)^{-c\varepsilon} \omega(r)\, dr\\
&\lesssim (1-|z|)^{c\varepsilon+1-\eta} \whw(z)(1-|z|)^{-c\varepsilon}\\
&=\dfrac{\whw(z)}{(1-|z|)^{\eta-1}},\quad z\in\D.
\end{split}
\end{equation*}

\end{proof}

The following lemma plays a critical role in the proof, while the corresponding result can not be generalized to the case $\om\in\DDD$ by the same method.
\begin{lemma}\label{lemma5}
Let  $\omega\in\RR$. Then there exists an $r=r_0(\om)>0$ such that
$$
MO_{\om,r}(f)(z)^2\lesssim \frac{1}{\omega(D(z,r))}\int_{D(z,r)} \left| \int_{D(z,r)}(f(u)-f(\zeta)) B_u^\omega(\zeta)\omega(\zeta)\, dA(\z)\right|^2\omega(u)\,dA(u),\quad z\in\D.
$$
\end{lemma}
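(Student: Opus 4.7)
Fix $z \in \D$ and write $D = D(z,r)$ for brevity. The strategy is to isolate inside $\int_D (f(u)-f(\zeta)) B^\om_u(\zeta)\omega(\zeta)\,dA(\zeta)$ the dominant contribution
$B^\om_u(u)\,\omega(D)\,(f(u) - \widehat{f_{\om,r}}(z))$; after integration in $u$ against $\omega(u)\,dA(u)$ this principal term produces precisely $\omega(D)\,MO_{\om,r}(f)(z)^2$, while the remainder can be absorbed provided $r$ is small. The mechanism allowing such an isolation is that, for $\omega \in \RR$, the normalized kernel $B^\om_u(\zeta)/B^\om_u(u)$ is uniformly close to $1$ on small Bergman discs. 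Accordingly, I would first establish the auxiliary estimate that there exists $\delta:(0,\infty)\to(0,\infty)$ with $\delta(r)\to 0$ as $r\to 0^+$ such that for every $z\in\D$ and all $u,\zeta\in D(z,r)$,
$$
\left| \frac{B^\om_u(\zeta)}{B^\om_u(u)} - 1 \right| \leq \delta(r).
$$
This is obtained by a Schwarz-type argument applied to the analytic function $\zeta \mapsto B^\om_u(\zeta)/B^\om_u(u) - 1$, which vanishes at $\zeta = u$ and is uniformly bounded on a fixed larger Bergman disc $D(u,R)$ via the two-sided estimate $|B^\om_u(\zeta)| \asymp B^\om_u(u) \asymp 1/\om(D(u,R))$ for $\om \in \RR$, available for any $R>0$ from \eqref{eq:important}.

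With this in hand, set $\epsilon_{u,\zeta} := B^\om_u(\zeta)/B^\om_u(u) - 1$ and decompose
$$
A(u) := \int_D (f(u)-f(\zeta)) B^\om_u(\zeta)\omega(\zeta)\,dA(\zeta) = B^\om_u(u)\bigl[\omega(D)(f(u) - \widehat{f_{\om,r}}(z)) + E(u)\bigr],
$$
where $E(u) = \int_D (f(u)-f(\zeta))\, \epsilon_{u,\zeta}\, \omega(\zeta)\,dA(\zeta)$. Since $|\epsilon_{u,\zeta}| \leq \delta(r)$, the Cauchy--Schwarz inequality yields
$$
|E(u)|^2 \leq \delta(r)^2\, \omega(D) \int_D |f(u)-f(\zeta)|^2 \omega(\zeta)\,dA(\zeta),
$$
and the elementary bound $|x+y|^2 \geq \tfrac{1}{2}|x|^2 - |y|^2$ then gives
$$
|A(u)|^2 \geq B^\om_u(u)^2 \left[ \tfrac{1}{2}\omega(D)^2\, |f(u) - \widehat{f_{\om,r}}(z)|^2 - |E(u)|^2 \right].
$$

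Integrating this against $\omega(u)\,dA(u)$ over $D$, invoking $B^\om_u(u) \asymp 1/\omega(D)$ (valid for all $r>0$ in the regular case by \eqref{eq:important}) together with the Fubini identity $\int_D \int_D |f(u)-f(\zeta)|^2 \omega(u)\omega(\zeta)\,dA(u)\,dA(\zeta) = 2\omega(D)^2\, MO_{\om,r}(f)(z)^2$, the principal and error contributions both become multiples of $\omega(D)\, MO_{\om,r}(f)(z)^2$, and I expect to obtain
$$
\int_D |A(u)|^2 \omega(u)\,dA(u) \gtrsim \bigl(1 - C\delta(r)^2\bigr)\, \omega(D)\, MO_{\om,r}(f)(z)^2.
$$
Choosing $r = r_0(\om)$ so small that $C\delta(r_0)^2 < 1/2$ and dividing through by $\omega(D)$ yields the claim. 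The main obstacle is the uniform closeness to $1$ of the normalized kernel in the first step: pointwise size bounds on $|B^\om_u|$ alone do not suffice, since one needs both phase and modulus of $B^\om_u(\zeta)/B^\om_u(u)$ to approach $1$ uniformly as the Bergman radius shrinks. This is precisely the reason the lemma forces $r$ to be small and, as noted in the remarks following Theorem \ref{maintheorem}, does not readily extend to $\om \in \DDD$.
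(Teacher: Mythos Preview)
Your proposal is correct and follows essentially the same strategy as the paper: both arguments show that the kernel $B_u^\omega(\zeta)$ is uniformly close to a reference value on $D(z,r)$ for small $r$, split the inner integral into a main term that reproduces $MO_{\om,r}(f)(z)^2$ and an error term controlled by the same quantity times a factor tending to zero with $r$, and then absorb. The paper normalizes by the fixed value $\|B_z^\omega\|^2=B_z^\omega(z)$ and obtains the smallness via a Luecking-type mean-value estimate, whereas you normalize by $B_u^\omega(u)$ and invoke the Schwarz lemma; both routes yield a bound of the form $|B_u^\omega(\zeta)/(\text{reference})-1|\le Cr$ (or $\delta(r)\to0$), and the remaining algebra is the same. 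One small correction: the two-sided estimate $|B_u^\omega(\zeta)|\asymp B_u^\omega(u)$ you cite need not hold for general $\omega\in\RR$ (the kernel may have zeros), but your Schwarz argument only requires the upper bound $|B_u^\omega(\zeta)|\le \|B_u^\omega\|\,\|B_\zeta^\omega\|\lesssim B_u^\omega(u)$, which follows from Cauchy--Schwarz together with \eqref{eq:important}, so the proof goes through unchanged.
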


\begin{proof}
Using the same method of proving \cite[p.129]{luecking1986}, we see that for any $0<p<\infty$ and $0<\sigma<r$
$$
| f(u)-f(z)|^p \lesssim\frac{\beta(u,z)^p}{(1-|z|)^2}  \int_{D(z,r)} |f(\zeta)|^p \,dA(\zeta),\quad u\in D(z,\sigma),~~f\in\H(\D).
$$
This together with \cite[Lemma 6 and (2.4)]{PelRaSi2018} yields for any fixed $r>0$
\begin{equation*}
    \begin{split}
|B_u^\omega(\zeta)-B_z^\omega(z)|&\leq|B_u^\omega(\zeta)-B_u^\omega(z)|+|B_z^\omega(u)-B_z^\omega(z)|\\
&\lesssim\frac{\beta(\zeta,z)+\beta(z,u)}{(1-|z|)^2}  \int_{D(z,r)} |B_\z^\omega(v)|\,dA(v)\\
&\lesssim \frac{\beta(\zeta,z)+\beta(z,u)}{\om(D(\z,r))}\asymp
(\beta(\zeta,z)+\beta(z,u)) B_z^\omega(z),\quad z\in\D,~~u,\z\in D(z,r),
    \end{split}
\end{equation*}
where the last estimate holds due to the fact that $B^\om_z(z)=\|B^\om_z\|_{A^2_{\om}}^2$. That is, there exists a constant $C=C(\om)$ such that
$$
\left|\frac{B_u^\omega(\zeta)}{\|B^\omega_{z}\|^2} -1\right|\leq C(\beta(\zeta,z)+\beta(z,u))\leq C r,\quad z\in\D.
$$
Now, if we take $L_u(\z)=\left(\frac{B_u^\omega(\zeta)}{\|B^\omega_{z}\|^2} -1\right)$, then $|L_u(\z)|\leq Cr$ for all $u,\zeta\in D(z,r)$. Since $\om\in\RR$, it follows from \eqref{eq:important} that $\|B^\om_z\|^2_{A^2_\om}\asymp\om(D(z,r))^{-1}\asymp\frac{1}{\whw(z)(1-|z|)}$ for all $r>0$.  Therefore, we have
\begin{equation}\label{eq}
    \begin{split}
  &\quad\frac{1}{\omega(D(z,r))}\int_{D(z,r)} \left| \int_{D(z,r)}(f(u)-f(\zeta)) B_u^\omega(\zeta)\omega(\zeta)\, dA(\z)\right|^2\omega(u)\,dA(u)\\
  &= \frac{1}{\omega(D(z,r))}\int_{D(z,r)} \left| \int_{D(z,r)}(f(u)-f(\zeta)) (1+L_u(\zeta))\|B^\omega_{z}\|^2 \omega(\zeta)\, dA(\z)\right|^2\omega(u)\,dA(u)\\
  &\asymp \frac{1}{\omega(D(z,r))^3}\int_{D(z,r)} \left| \int_{D(z,r)}(f(u)-f(\zeta)) (1+L_u(\zeta)) \omega(\zeta)\, dA(\z)\right|^2\omega(u)\,dA(u)
  \end{split}
\end{equation}
Then applying the triangle inequality that $|a+b|^2\geq \frac{|a|^2}{2}-|b|^2$ to the inner integral of the last formular above, we have 
\begin{equation}\label{tri}
  \begin{split}
     &\quad\left| \int_{D(z,r)}(f(u)-f(\zeta)) (1+L_u(\zeta)) \omega(\zeta)\, dA(\z)\right|^2\\
&=\left| \int_{D(z,r)}f(u)-f(\zeta))\om(\z)\,dA(\z)+\int_{D(z,r)} (f(u)-f(\zeta))L_u(\zeta)\omega(\zeta)\,dA(\z)\right|^2\\
&\geq\frac{1}{2}\left|\int_{D(z,r)}f(u)-f(\zeta))\om(\z)\,dA(\z)\right|^2-\left|\int_{D(z,r)} (f(u)-f(\zeta))L_u(\zeta)\omega(\zeta)\,dA(\z)\right|^2\\
&\geq\frac{1}{2}\left|\int_{D(z,r)}f(u)-f(\zeta))\om(\z)\,dA(\z)\right|^2-\left(\int_{D(z,r)} |(f(u)-f(\zeta))L_u(\zeta)|\omega(\zeta)\,dA(\z)\right)^2
    \end{split}
\end{equation}
Therefore, \eqref{eq}, \eqref{tri} and the above estimate of $|L_u(\z)|$ yield
\begin{equation}\label{0}
    \begin{split}
      &\quad\frac{1}{\omega(D(z,r))}\int_{D(z,r)} \left| \int_{D(z,r)}(f(u)-f(\zeta)) B_u^\omega(\zeta)\omega(\zeta)\, dA(\z)\right|^2\omega(u)\,dA(u) \\
  &\geq\frac{1}{2\omega(D(z,r))}\int_{D(z,r)} \left. \left| \frac{1}{\omega(D(z,r))}\int_{D(z,r)}(f(u)-f(\zeta)) \omega(\zeta)\,dA(\zeta)\right|^2\right.\om(u)\,dA(u)\\
  &\quad\left.-\frac{C^2r^2}{\omega(D(z,r))^3} \int_{D(z,r)}\left( \int_{D(z,r)} |f(u)-f(\zeta)| \omega(\zeta)\,dA(\zeta)\right)^2\right.\,\omega(u)\,dA(u). 
    \end{split}
\end{equation}

On one hand, we have 
\begin{equation}\label{11}
    \frac{1}{\omega(D(z,r))}\int_{D(z,r)}(f(u)-f(\zeta)) \omega(\zeta)\,dA(\zeta)=
f(u)-\widehat{f_{\om,r}}(z).
\end{equation}
On the other hand, Cauchy-Schwarz inequality yields
\begin{equation}\label{2}
\begin{split}
 &\quad\int_{D(z,r)}  \left( \int_{D(z,r)} |f(u)-f(\zeta)| \omega(\zeta)\,dA(\zeta)\right)^2 \omega(u)\,dA(u)\\
&\leq\omega(D(z,r)) \int_{D(z,r)} \int_{D(z,r)} |f(u)-f(\zeta)|^2 \omega(\zeta)\omega(u)\,dA(\zeta)\, dA(u)\\
&= \omega(D(z,r))^3 \frac{1}{\omega(D(z,r))^2} \int_{D(z,r)} \int_{D(z,r)} |f(u)-f(\zeta)|^2 \omega(\zeta)\omega(u)\,dA(\zeta)\, dA(u)\\
&=2 \omega(D(z,r))^3 MO_{\om,r}(f)(z)^2,\quad z\in\D.
\end{split}
\end{equation}
Then, combining \eqref{0}, \eqref{11} and \eqref{2}, we deduce
\begin{equation*}
    \begin{split}
     &\quad\frac{1}{\omega(D(z,r))}\int_{D(z,r)} \left| \int_{D(z,r)}(f(u)-f(\zeta)) B_u^\omega(\zeta)\omega(\zeta)\, dA(\z)\right|^2\omega(u)\,dA(u) \\
  &\geq (\frac{1}{2}-{2C^2r^2})MO_{\om,r}(f)(z)^2,\quad z\in\D.  
    \end{split}
\end{equation*}
Finally, by choosing $r$ so that $0<C^2r^2<\frac14$, we arrive at the desired result.
\end{proof}

The following lemma is critical to the proof of the lemma \ref{lemma2}.
\begin{lemma}\label{lemma1}
Let $\omega\in \DDD$ and $f\in L^2_\om$. Then there exist an $r_0=r_0(\om)>0$ such that for any $r\geq r_0$ and $z, \z\in\D$ with $\beta(z,\zeta)<r$  we have
$$
|\widehat{f_{\om,r}}(z)-\widehat{f_{\om,r}}(\zeta)|\lesssim MO_{\om,2r}(f)(z), \qquad\widehat{|g|_r^2}(z)\lesssim MO_{\om,2r}(f)(z)^2,
$$
where $g=f-\widehat{f_{\om,r}}$.
\end{lemma}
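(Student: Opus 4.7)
The plan is to bridge the two averages $\widehat{f_{\om,r}}(z)$ and $\widehat{f_{\om,r}}(\zeta)$ by comparing each of them to the common anchor $\widehat{f_{\om,2r}}(z)$, exploiting the inclusions $D(z,r), D(\zeta,r)\subset D(z,2r)$ valid whenever $\beta(z,\zeta)<r$. The only non-trivial input is the measure-comparability
$$
\om(D(z,r))\asymp\om(D(\zeta,r))\asymp\om(D(z,2r))\asymp\whw(z)(1-|z|),
$$
which holds for $\om\in\DDD$ once $r\geq r_{0}(\om)$ and $\beta(z,\zeta)<r$; this is a standard consequence of results such as \cite[Lemma 2.1]{pelaez5}. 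I would fix this $r_{0}$ at the outset of the argument.

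For the first inequality, I would write
$$
|\widehat{f_{\om,r}}(z)-\widehat{f_{\om,r}}(\zeta)|\le |\widehat{f_{\om,r}}(z)-\widehat{f_{\om,2r}}(z)|+|\widehat{f_{\om,r}}(\zeta)-\widehat{f_{\om,2r}}(z)|
$$
and treat both summands identically. Representing
$$
\widehat{f_{\om,r}}(z)-\widehat{f_{\om,2r}}(z)=\frac{1}{\om(D(z,r))}\int_{D(z,r)}\bigl(f(\eta)-\widehat{f_{\om,2r}}(z)\bigr)\om(\eta)\,dA(\eta),
$$
Cauchy--Schwarz together with the inclusion $D(z,r)\subset D(z,2r)$ yields the bound
$$
\bigl(\om(D(z,2r))/\om(D(z,r))\bigr)^{1/2}MO_{\om,2r}(f)(z)\lesssim MO_{\om,2r}(f)(z).
$$
The second summand is handled in the same way after replacing $D(z,r)$ by $D(\zeta,r)\subset D(z,2r)$ and invoking $\om(D(\zeta,r))\asymp\om(D(z,2r))$.

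For the second inequality, applying $|a+b|^{2}\le 2(|a|^{2}+|b|^{2})$ to $g(\eta)=f(\eta)-\widehat{f_{\om,r}}(\eta)$ gives
$$
|g(\eta)|^{2}\le 2|f(\eta)-\widehat{f_{\om,2r}}(z)|^{2}+2|\widehat{f_{\om,2r}}(z)-\widehat{f_{\om,r}}(\eta)|^{2}.
$$
Averaging over $D(z,r)$, the first summand contributes $\lesssim MO_{\om,2r}(f)(z)^{2}$ after invoking $D(z,r)\subset D(z,2r)$ and $\om(D(z,r))\asymp\om(D(z,2r))$, while the second is $\lesssim MO_{\om,2r}(f)(z)^{2}$ uniformly in $\eta\in D(z,r)$: indeed, since $\beta(z,\eta)<r$, the intermediate bounds produced during the proof of the first inequality give both $|\widehat{f_{\om,r}}(z)-\widehat{f_{\om,2r}}(z)|\lesssim MO_{\om,2r}(f)(z)$ and $|\widehat{f_{\om,r}}(z)-\widehat{f_{\om,r}}(\eta)|\lesssim MO_{\om,2r}(f)(z)$, which combine by the triangle inequality. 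The only genuine obstacle throughout is the comparability of $\om$-measures of Bergman discs of radii $r$ and $2r$, which is where the $\DDD$ hypothesis and the restriction $r\ge r_{0}(\om)$ are indispensable; once this is available, the rest of the argument is pure triangle-inequality bookkeeping.
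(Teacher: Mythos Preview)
Your proposal is correct and follows essentially the same approach as the paper: both arguments rest on the measure comparability $\om(D(z,r))\asymp\om(D(\zeta,r))\asymp\om(D(z,2r))$ for $\om\in\DDD$ and $r\ge r_0(\om)$, the inclusions $D(z,r),D(\zeta,r)\subset D(z,2r)$, and Cauchy--Schwarz, and both deduce the second inequality from the first via a triangle-inequality splitting. The only organizational difference is that for the first inequality the paper writes $\widehat{f_{\om,r}}(z)-\widehat{f_{\om,r}}(\zeta)$ directly as a double integral and invokes the representation \eqref{e3}, whereas you pass through the common anchor $\widehat{f_{\om,2r}}(z)$; both routes are equally short.
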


\begin{proof}
We first observe that since $\om\in\DDD$ by the hypothesis, there exists an $r_0=r_0(\om)>0$ such that $\om(D(z,r))\asymp\widehat{\om}(z)(1-|z|)$ as $|z|\to1^-$, provided $r>r_0$. This asymptotic equality together with Fubini's theorem and H\"{o}lder inequality indicates that for $z,\zeta\in \mathbb{D}$ with $\beta(z,\zeta)<r$
\begin{align*}
|\widehat{f_{\om,r}}(z)-\widehat{f_{\om,r}}(\zeta)|^2
&=\left|\dfrac{1}{\omega(D(z,r))\omega(D(\zeta,r))} \int_{D(\zeta,r)} \int_{D(z,r)} f(u)\om(u)\, dA(u)\om(v)\,dA(v)\right.\\
 & \ -\left.\dfrac{1}{\omega(D(z,r))\omega(D(\zeta,r))}\int_{D(z,r)} \int_{D(\zeta,r)}  f(v)\om(v)\, dA(v)\om(u)\,dA(u)\right|^2\\
 &\leq\frac{1}{\omega(D(\zeta,r))\omega(D(z,r))} \left(\int_{D(z,r)}\int_{D(\zeta,r)} |f(u)-f(v)| \om(u)\om(v) \,dA(u) \,dA(v)\right)^2\\
&\lesssim\frac{1}{\omega(D(\zeta,r))\omega(D(z,r))} \int_{D(z,r)}\int_{D(\zeta,r)} |f(u)-f(v)|^2 \om(u)\om(v) \,dA(u) \,dA(v)\\
&\lesssim\frac{1}{\omega(D(z,2r))^2} \int_{D(z,2r)}\int_{D(z,2r)} |f(u)-f(v)|^2 \om(u)\om(v) \,dA(u) \,dA(v)\asymp MO_{\om,2r}^2(f)(z).
\end{align*}
Next, to see the second inequality, the triangle inequality and the first inequality yield
\begin{align*}
\left(\widehat{|g|_r^2}(z)\right)^{\frac12}&=\left(\frac{1}{\omega(D(z,r))} \int_{D(z,r)} |f(\zeta)-\widehat{f_{\om,r}}(\zeta)|^2 dA_\omega(\zeta)\right)^{\frac12} \\
&\leq\left(\dfrac{1}{\omega(D(z,r))} \int_{D(z,r)} |f(\zeta)-\widehat{f_{\om,r}}(z)|^2\om(\z)\, dA(\zeta)\right)^{\frac12}\\
&\quad+\left(\dfrac{1}{\omega(D(z,r))} \int_{D(z,r)} |\widehat{f_{\om,r}}(z)-\widehat{f_{\om,r}}(\zeta)|^2\om(\z)\,dA(\zeta)\right)^{\frac12}\\
&\lesssim \left(\frac{1}{\omega(D(z,2r))} \int_{D(z,2r)} |f(\zeta)-\widehat{f_{\om,r}}(z)|^2 \om(\z)\,dA(\zeta)\right)^{\frac12} \\
 &\quad+ MO_{\om,2r}(f)(z) \left( \frac{1}{\omega(D(z,r))} \int_{D(z,r)}  \om(\z)\,dA(\zeta) \right)^{\frac12} \asymp MO_{\om,2r}(f)(z).
\end{align*}
The proof is complete.
\end{proof}

Minor modifications in the proof of \cite[Lemma 3.1]{pau2} together with Lemma~ \ref{lemma1} yield the following result, which plays a key role in the proof of the main theorem.

\begin{lemma}\label{lemma2}
   Let $0<p,d,\delta<\infty$ and $\omega\in \mathcal{D}$. Then there exist an $r_0=r_0(\om)>0$ such that for any $r\geq r_0$ and any for $r$-lattice $\{a_j\}$,
$$
|\widehat{f_{\om,r}}(z)-\widehat{f_{\om,r}}(\z)|\lesssim N_p(f,\z)^{1/p} |1-\overline{\z}z|^d (1+\beta(\z,z))(\text{min}~\{(1-|z|,1-|\z|)\})^{-\delta}\quad z,\z\in \mathbb{D},
$$
with
$$
N_p(f,\z)=\sum_j^\infty
\frac{MO_{\om,2r}(f)(a_j)^p (1-|a_j|^2)^{\delta p}}{|1-\overline{\z}a_j|^{pd}}.
$$
\end{lemma}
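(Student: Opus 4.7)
My plan is to adapt the chain-of-lattice-points argument used in \cite[Lemma 3.1]{pau2}, with Lemma~\ref{lemma1} supplying the one-step estimate for the oscillation of $\widehat{f_{\om,r}}$ between nearby points. Fix $r\geq r_0(\om)$ large enough for Lemma~\ref{lemma1} to apply. Given $\z,z\in\D$, I would parametrize the Bergman geodesic $\gamma$ from $\z$ to $z$ and, using that the $r$-lattice covers $\D$ via the discs $\{D(a_j,r)\}$, select a chain of lattice points $a_{j_0},\ldots,a_{j_N}$ near $N+1$ evenly spaced points of $\gamma$ with the properties $\beta(\z,a_{j_0})<r$, $\beta(z,a_{j_N})<r$, $\beta(a_{j_k},a_{j_{k+1}})\leq r$, and $N\lesssim 1+\beta(\z,z)$.

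Telescoping and Lemma~\ref{lemma1} applied to each consecutive pair (using the symmetry of the lemma to center the one-step bound at the lattice-point end for the two boundary steps) then yields
$$
|\widehat{f_{\om,r}}(\z)-\widehat{f_{\om,r}}(z)|\lesssim \sum_{k=0}^N MO_{2r}(f)(a_{j_k}).
$$
To extract the stated weighted form, I would write each summand as $A_kB_k$ with
$$
A_k=\frac{MO_{2r}(f)(a_{j_k})(1-|a_{j_k}|^2)^\delta}{|1-\overline{\z}a_{j_k}|^d},\qquad B_k=\frac{|1-\overline{\z}a_{j_k}|^d}{(1-|a_{j_k}|^2)^\delta}.
$$
The key geometric step is a uniform upper bound on $B_k$: on $\gamma$, hyperbolic-disc convexity gives $1-|w|^2\gtrsim \min\{1-|z|^2,1-|\z|^2\}$, while the M\"obius identity $|1-\overline{\z}\varphi_\z(v)|=(1-|\z|^2)/|1-\overline{\z}v|$ applied in the coordinate $v=\varphi_\z(w)$, in which $\gamma$ becomes the Euclidean segment from $0$ to $\varphi_\z(z)$, gives $|1-\overline{\z}w|\lesssim |1-\overline{\z}z|$ by a direct elementary computation. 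Both comparisons extend to lattice points at Bergman distance at most $r$ from $\gamma$ by the standard fact that $|1-\overline{\z}\cdot|$ and $1-|\cdot|^2$ are stable up to constants on Bergman discs of fixed radius. Consequently $B_k\lesssim |1-\overline{\z}z|^d/\min\{1-|z|,1-|\z|\}^\delta$ uniformly in~$k$.

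Finally, to bound $\sum_k A_k$, H\"older's inequality (for $p\geq 1$) gives
$$
\sum_{k=0}^N A_k\leq (N+1)^{1/p'}\Bigl(\sum_{k=0}^N A_k^p\Bigr)^{1/p}\lesssim (1+\beta(\z,z))\,N_p(f,\z)^{1/p},
$$
since $\sum_kA_k^p\leq N_p(f,\z)$ as the chain indices form a subset of the full lattice. For $0<p\leq 1$ the elementary subadditivity $\sum_kA_k\leq (\sum_kA_k^p)^{1/p}$ yields the same bound without the $N^{1/p'}$ factor. Multiplying by the uniform bound on $B_k$ produces the claimed inequality. The main obstacle I anticipate is the geometric comparison on $\gamma$ together with its propagation to the chain's lattice points; once those are in hand the rest is a routine insertion-and-H\"older argument.
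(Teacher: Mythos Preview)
Your proposal is correct and is precisely the route the paper takes: it explicitly says to reproduce the chain-of-lattice-points argument of \cite[Lemma~3.1]{pau2}, replacing Pau's one-step oscillation estimate \cite[(3.5)]{pau2} by Lemma~\ref{lemma1}, and your outline carries out exactly that substitution together with the standard geodesic geometry and the H\"older/subadditivity step. One small caution: since an $r$-lattice is $r$-separated, distinct consecutive lattice points cannot satisfy $\beta(a_{j_k},a_{j_{k+1}})\le r$; in practice one telescopes along geodesic points spaced $<r$ apart (or allows $\beta(a_{j_k},a_{j_{k+1}})\lesssim r$) and absorbs the resulting fixed enlargement of the $MO$-radius via the usual comparability $\om(D(\cdot,cr))\asymp\om(D(\cdot,2r))$ for $\om\in\DDD$, exactly as in \cite{pau2}.
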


\begin{proof}
The argument is similar as the proof of \cite[Lemma ~3.1]{pau2}. For the convenience of the reader, we provide the proof. For any $z, \z\in\D$, let $\gamma(t)$, $0\leq t\leq1$ be the geodesic in the Bergman metric from $z$ to $\z$. Let $N=[{3\beta(z,\z)}/{r}]+1$ (where $[x]$ denotes the largest integer no greater than $x$) and $t_j=j/N$, $j=0, 1,\cdots,N.$ Setting $z_j=\gamma(t_j)$, we have 
$$
\beta(z_j,z_{j+1})=\frac{\beta(z,\z)}{N}\leq \frac{r}{3}.
$$
    For each $j$, choose a point $a_j$ in the lattice such that $\beta(z_j,a_j)\leq \frac{r}{2}$ and hence $\beta(z_{j-1},a_j)\leq \frac{5r}{6}<r$. Then the above estimate, triangle inequality and Lemma \ref{lemma1} yield
    \begin{equation}\label{xxx}
        \begin{split}
    |\widehat{f_{\om,r}}(z)-\widehat{f_{\om,r}}(\z)|&\leq\sum_{j=1}^N|\widehat{f_{\om,r}}(z_{j-1})-\widehat{f_{\om,r}}(z_j)|\\
&\leq\sum_{j=1}^N|\widehat{f_{\om,r}}(z_{j-1})-\widehat{f_{\om,r}}(a_j)|+|\widehat{f_{\om,r}}(z_{j})-\widehat{f_{\om,r}}(a_j)|\\
&\lesssim \sum_{j=1}^N MO_{\om,2r}(f)(a_j), \quad z,\z\in\D.
        \end{split}
    \end{equation}
It follows from the proof of \cite[Lemma ~3.1]{pau2} that 
$$
\frac{|1-\overline{\z}a_j|}{|1-\overline{\z}z|}\leq2, \quad z,\z\in\D.
$$
Applying this estimate into \eqref{xxx}, for any $0<d<\infty$ we deduce
$$
|\widehat{f_{\om,r}}(z)-\widehat{f_{\om,r}}(\z)|\lesssim\sum_{j=1}^N\frac{MO_{\om,2r}(f)(a_j)}{|1-\overline{\z}a_j|^d}|1-\overline{\z}z|^d, \quad z,\z\in\D.
$$
Finally, by repeating the same steps as they were used in the proof of \cite[Lemma ~3.1]{pau2}, we are in a position to get the result we are aiming for.
\end{proof}

To prove the main result, we also need the following result, which is dealt with for a wider class of weights.

\begin{lemma} \label{lemma3}
Let $\omega\in \widehat{\mathcal{D}}$ and $f\in L^2_\om$. Then, there exists an $\eta_0=\eta_0(\om)>0$ such that for all $\eta>\eta_0$
\begin{equation*}
 \|H_f k_{\om,z}^{\eta+2}\|_{A^2_\om}+ \|H_{\overline{f}}k_{\om,z}^{\eta+2}\|_{A^2_\om}\lesssim MO_{\omega,\eta}(f)(z),\quad z\in\D.
\end{equation*}
\end{lemma}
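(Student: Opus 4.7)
The plan is to exploit the fact that $I-P_\om$ is the orthogonal projection of $L^2_\om$ onto the orthogonal complement of $A^2_\om$, so that for every $u\in L^2_\om$ the quantity $\|(I-P_\om)u\|_{L^2_\om}$ coincides with the distance from $u$ to $A^2_\om$. In particular, $\|(I-P_\om)u\|_{L^2_\om}\le \|u-h\|_{L^2_\om}$ for every $h\in A^2_\om$. The lemma will then follow by choosing, for each $z\in\D$, a specific analytic approximant of $f k^{\eta+2}_{\om,z}$ (respectively, of $\overline{f}\, k^{\eta+2}_{\om,z}$) whose $L^2_\om$ distance is exactly $MO_{\om,\eta}(f)(z)$.

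To begin, I fix the threshold $\eta_0=\eta_0(\om)$ large enough to guarantee $K^{\eta+2}_z\in A^2_\om$ for every $\eta>\eta_0$. Since $\om\in\DD$, this is a standard Bergman kernel integrability estimate, e.g.\ \cite[Theorem~1]{pelaez2}. For such $\eta$ the normalized kernel $k^{\eta+2}_{\om,z}$ is well defined. For the bound on $H_f$, I choose the approximant $h_1(\z):=B_{\om,\eta}(f)(z)\,k^{\eta+2}_{\om,z}(\z)$, which is a scalar multiple of a kernel and hence belongs to $A^2_\om$. Combining the distance inequality with the identity \eqref{e1} immediately gives
$$
\|H_f k^{\eta+2}_{\om,z}\|_{L^2_\om}\le \|f k^{\eta+2}_{\om,z}-h_1\|_{L^2_\om}=MO_{\om,\eta}(f)(z).
$$

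For $H_{\overline{f}}$ I use the mirror approximant $h_2(\z):=\overline{B_{\om,\eta}(f)(z)}\,k^{\eta+2}_{\om,z}(\z)$, which is still an element of $A^2_\om$ because $\overline{B_{\om,\eta}(f)(z)}$ is a constant in the integration variable $\z$. The same distance argument yields
$$
\|H_{\overline{f}} k^{\eta+2}_{\om,z}\|_{L^2_\om}^{2}\le \int_{\D}\bigl|\overline{f(\z)}-\overline{B_{\om,\eta}(f)(z)}\bigr|^{2}\bigl|k^{\eta+2}_{\om,z}(\z)\bigr|^{2}\om(\z)\,dA(\z),
$$
and since $|\overline{a}-\overline{b}|=|a-b|$, the right-hand side equals $MO_{\om,\eta}(f)(z)^{2}$ by exactly the same integral identity that produces \eqref{e1}. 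Adding the two estimates yields the claim.

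I do not anticipate any genuine obstacle: the entire argument is a direct application of the orthogonality of $P_\om$, the only delicate point being the choice of $\eta_0$ large enough (depending only on $\om\in\DD$) for the reproducing kernels $K^{\eta+2}_z$ to belong to $A^2_\om$. Note that no regularity beyond $\om\in\DD$ is needed, so the lemma is naturally stated at this level of generality.
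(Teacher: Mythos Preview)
Your proof is correct and is essentially the same argument as the paper's: both exploit the orthogonality of $P_\om$ to bound $\|H_f k^{\eta+2}_{\om,z}\|_{L^2_\om}$ by the distance from $f k^{\eta+2}_{\om,z}$ to the analytic function $B_{\om,\eta}(f)(z)\,k^{\eta+2}_{\om,z}$, which equals $MO_{\om,\eta}(f)(z)$ via \eqref{e1}. The paper phrases this as Pythagoras plus the Cauchy--Schwarz bound $|B_{\om,\eta}(f)(z)|=|\langle P_\om(f k^{\eta+2}_{\om,z}),k^{\eta+2}_{\om,z}\rangle|\le\|P_\om(f k^{\eta+2}_{\om,z})\|_{A^2_\om}$, while you phrase it as the distance-minimizing property of the orthogonal projection; these are equivalent.
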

\begin{proof}
Since $\om\in\DD$, by \cite[Lemma 2.1]{pelaez5}, there exists an $\eta_0=\eta_0(\om)>0$ such that for all $\eta>\eta_0$, $k_{\om,z}^{\eta+2}\in A_\omega^2$. Therefore, Cauchy-Schwarz's inequality yields
$$
|B_{\omega,\eta}(f)(z)|=|\langle f k_{\om,z}^{\eta+2}, k_{\om,z}^{\eta+2}\rangle_{A^2_\om}|=|\langle P_\omega(f k_{\om,z}^{\eta+2}),k_{\om,z}^{\eta+2} \rangle_{A^2_\omega}| \leq \| P_\omega(f k_z^{\eta+2})\|_{A^2_\omega} .
$$
This together with the Pythagorean theorem implies
\begin{align*}
\|H_f k_{\om,z}^{\eta+2} \|_{A^2_\omega}&= \left(\|f k_{\om,z}^{\eta+2}\|_\omega^2 - \|P_\omega(f k_{\om,z}^{\eta+2})\|_\omega^2 \right)^{\frac12} \leq  \left(\|f k_{\om,z}^{\eta+2}\|_\omega^2 - |B_{\omega,\eta}(f)(z)|^2\right)^{\frac12}\\
&=\left( B_{\omega,\eta}(|f|^2)(z)-|B_{\omega,\eta}(f)(z)|^2\right)^{\frac12} = MO_{\omega,\eta}(f)(z).
\end{align*}
Likewise, we obtain $\|H_{\overline{f}}k_z^{\eta+2}\|_{A^2_\omega}\lesssim MO_{\omega,\eta}(f)(z)$. Therefore,
$$\|H_f k_{\om,z}^{\eta+2}\|_{A^2_\omega}+ \|H_{\overline{f}} k_{\om,z}^{\eta+2}\|_{A^2_\omega}\lesssim MO_{\omega,\eta}(f)(z).$$
\end{proof}

\section{Proof of Theorem \ref{maintheorem}}
Now, we are ready to prove our main result. The proof will be finished by verifying that (ii) $\Rightarrow$ (i), (i) $\Rightarrow$ (iii) and (ii) $\Leftrightarrow$ (iii) respectively. A sequence $\{\a_j\}$ of distinct points in $\D$ is called $r$-separated if $\inf_{i\neq j}\beta(a_i, a_j)>r>0$. $\{\a_j\}$ is called $r$-lattice if it is $r$-separated and satisfies $\D=\cup_{j}D(a_j,r)$. Here and from now on, write simply $D(a_j,r)$ by $D_j$.

The method applied here to prove (ii) $\Rightarrow$ (i) is quite different from the approaches used to prove the case of the standard weight, which relies on a construction of a linear bounded operator on $A^2_\om$ and hence avoids lots of tedious calculations. Moreover, we will also prove (ii) $\Rightarrow$ (i) for the full range $0<p<\infty$.
\vskip0.3cm

\noindent
{\bf Proof of (ii) $\Rightarrow$ (i)}: Assume (ii) Holds. Again, since $\om\in\DDD$, there exists an $\eta_0=\eta_0(\om)>0$ such that for all $\eta>\eta_0$, $k_z^{\eta+2}\in A_\omega^2$. Then using the atomic decomposition of $A^2_\om$ (see \cite[Theorem 2]{Pelaez7} with the special case $p=q=2$), we see that for any $f\in A^2_\om$ there exists a sequence $\{\lambda\}_j\in \ell^2$ such that 
\begin{equation}\label{atom}
f(z)=\sum_{j=0}^\infty \lambda_j k^{\eta+2}_{\om,a_j}(z), \quad z\in\D.
\end{equation}
for a $r=r(\om)$-lattice $\{\a_j\}$. Conversely, each $f$ with the form of \eqref{atom} and $\{\lambda\}_j\in\ell^2$ must belong to $A^2_\om$. Those facts enable us to define a linear bounded surjective operator on $A^2_\om$, which plays a critical role in the proof. To be concrete, let $\{e_j\}_j$ be an orthonormal basis of $A^2_\om$. Then for each $f\in A^2_\om$, it can be written as  
$$
f(z)=\sum_{j=0}^\infty \langle f, e_j\rangle_{A^2_\om}\,e_j(z), \quad z\in\D,
$$
where $\langle \cdot, \cdot\rangle_{A^2_\om}$ is the inner product of $A^2_\om$. Moreover, we have $\|\{\langle f, e_j\rangle_{A^2_\om}\}_j\|_{\ell^2}=\|f\|_{A^2_\om}$. Now, define the linear operator $A: A^2_\om\to A^2_\om$ as follows:
\begin{align*}
\begin{split}
A(f)(z)=\sum_{j=0}^\infty \langle f, e_j\rangle_{A^2_\om}\, k^{\eta+2}_{\om,a_j}(z), \quad z\in\D.
 \end{split}
 \end{align*}
It is easy to see that $A$ is bounded and onto and 
 $$
 Ae_j=k^{\eta+2}_{\om,a_j}, \quad j=0,1,\cdots.
 $$
 We will see that (ii) implies (iii) trivially and hence both $H_f$ and $H_{\overline{f}}$ are compact on $A^2_\om$ by \cite[Theorem 4.5]{hu}. Therefore, to show (i), it suffices to show that both
$A^*H_f^*H_f A,$ ~$A^* H^*_{\overline{f}} H_{\overline{f}} A \in S_\frac{p}{2}$
by \cite[Proposition 1.30]{zhu2}, which can be done if we show
$$
\sum_j \langle (A^* H_f^*H_f A)^\frac{p}{2} e_j,e_j\rangle_{A^2_\om} +\langle (A^* H^*_{\overline{f}} H_{\overline{f}} A)^\frac{p}{2} e_j,e_j\rangle_{A^2_\om}<\infty.
$$
If $0< p\leq2$, then by \cite[Proposition 1.31]{zhu2}, Lemma~\ref{lemma3}, and the hypothesis that $\om\in\DDD$, we deduce
\begin{equation}\label{x}
\begin{split}
    &\quad\sum_j \langle (A^* H_f^*H_f A)^\frac{p}{2} e_j,e_j\rangle_{A^2_\om} +\langle (A^* H^*_{\overline{f}} H_{\overline{f}} A)^\frac{p}{2} e_j,e_j\rangle_{A^2_\om}\\
    &\leq\sum_j \langle A^* H_f^*H_f A e_j,e_j\rangle_{A^2_\om}^\frac{p}{2} +\langle A^* H^*_{\overline{f}} H_{\overline{f}} A e_j,e_j\rangle_{A^2_\om}^\frac{p}{2}\\
    &=\sum_j \|H_f k_{\om,a_j}^{\eta+2}\|_{L^2_\om}^p+ \|H_{\overline{f}}k_{\om,a_j}^{\eta+2}\|_{L^2_\om}^p\\
    &\lesssim\sum_j MO_{\om,\eta}(f)(a_j)^p\asymp\sum_j\int_{D_j}MO_{\om,\eta}(f)(a_j)^p\,d\lambda(z).
    \end{split}
\end{equation}
Now, it follows from \cite[(2.20)]{zhu1} that $|1-\overline{a_j}\z|\asymp|1-\overline{z}\z|$ for all $\z\in\D$ and $z\in D_j$. This estimate together with \eqref{kernel} and the fact that $\whw(a_j)\asymp\whw(z)$ as long as $z\in D_j$, we get that there exists an $\eta_1=\eta_1(\om)>\eta_0$ such that for all $\eta\geq \eta_1$,
$$
k^{\eta+2}_{\om,a_j}(\z)\asymp k^{\eta+2}_{\om,z}(\z),\quad z\in D_j,~~\z\in\D.
$$

Applying this estimate into \eqref{x} and noticing the definition of $MO_{\om,\eta}$, we deduce
\begin{equation*}
    \begin{split}
       &\quad\sum_j \langle (A^* H_f^*H_f A)^\frac{p}{2} e_j,e_j\rangle_{A^2_\om} +\langle (A^* H^*_{\overline{f}} H_{\overline{f}} A)^\frac{p}{2} e_j,e_j\rangle_{A^2_\om}\\
&\lesssim\sum_j\int_{D_j}MO_{\om,\eta}(f)(a_j)^p\,d\lambda(z)\asymp\sum_j\int_{D_j}MO_{\om,\eta}(f)(z)^p\,d\lambda(z) \\
    &\lesssim \int_{\mathbb{D}} MO_{\omega,\eta}(f)(z)^p \,d{\lambda(z)}=\| MO_{\omega,\eta}(f)\|_{L^p(\D, d\lambda)}^p<\infty,
    \end{split}
\end{equation*}
which is the desired result we are aiming for.

If $2<p<\infty$, then we may reach the same result by following the above steps by applying \cite[Theorem 1.27]{zhu2} instead of \cite[Proposition 1.31]{zhu2}.\hfill$\Box$

We are in a position to prove that (ii) $\Leftrightarrow$ (iii) for the full range $0<p<\infty$ and for the weight belongs to $\DDD$. \vskip0.3cm

\noindent
{\bf Proof of (ii) $\Leftrightarrow$ (iii)}: Since $\om\in\DDD$, by the double integration representations \eqref{e2} and \eqref{e3} of $MO_{\om,\eta}$ and $MO_{\om,r}$, we may easily see that
$MO_{\om,r}(f)(z)\lesssim MO_{\omega,\eta}(f)(z)$
for a sufficiently large $r>0$ and $\eta>0$ depending on $\om$, which gives (ii)$\Rightarrow$(iii).

Conversely, suppose (iii) holds. We will prove that (ii) is also valid by applying the technique used in \cite{pau2}. Since $\om\in\DDD$ by the hypothesis, $\om(D(z,r))\asymp\widehat{\om}(z)(1-|z|)$ for a sufficiently large $r=r(w)$, say $r>r_0=r_0(\om)$. Let now $0<p<\infty$ and $\{a_j\}$ be an $r$-lattice of $\D$. Then the fact that the number of discs $D_j$ to which each $z$ may belong is uniformly bounded yields
\begin{equation}\label{aim1}
   \sum_j MO_{\om,r}(f)(a_j)^p \asymp \int_\mathbb{D} MO_{\om,r}(f)(z)^p\,  d\lambda(z).
\end{equation}
 Then for a sufficiently large $\eta$ depending on $\om$, say $\eta>\eta_0=\eta_0(\om)$, by \eqref{e3} and triangle inequality, we deduce
\begin{equation}\label{eqx}
\begin{split}
    MO_{\omega,\eta}(f)(z)^2&\leq\sum_{j}\sum_k\int_{D_j}\int_{D_k} |f(u)-f(\zeta)|^2 |k_{\om,z}^{\eta+2}(u)|^2  |k_{\om,z}^{\eta+2}(\zeta)|^2 \omega(u)\omega(\zeta)\,dA(u) dA(\z)\\
    &\lesssim \sum_{j} |k_{\om,z}^{\eta+2}(a_j)|^2 \int_{D_j} |f(u)-\widehat{f_{\om,r}}(z)|^2\om(u)\,dA(u)\lesssim A_1(f,z)+A_2(f,z),\quad z\in\D,
\end{split}
\end{equation}
where
$$
A_1(f,z)= \sum_{j} MO_{\om,r}(f)(a_j)^2  \omega(D_j) |k_{\om,z}^{\eta+2}(a_j)|^2
$$
and
$$A_2(f,z)= \sum_{j} |k_{\om,z}^{\eta+2}(a_j)|^2 |\widehat{f_{\om,r}}(a_j)-\widehat{f_{\om,r}}(z)|^2 \omega(D_j).$$

Therefore, to show (ii), by \eqref{aim1} and \eqref{eqx}, it suffices to show that
\begin{equation}\label{aim}
    \int_\mathbb{D} A_i(f,z)^{\frac{p}{2}} dA\lambda(z) \lesssim \sum_{j} MO_{\om,r}(f)(a_j)^p, \quad i=1, 2.
\end{equation}

First, let us estimate the above integral with integrant $A_1(f,z)$. Since $\om\in\DDD$, there exists an $\eta_1=\eta_1(\om)>\eta_0$ such that for all $\eta>\eta_1$, $\frac{(1-|\cdot|^2)^{p(\eta+3/2)-2}}{\widehat{\omega}(\cdot)^{p/2}}\in\DDD$.

If $0<p\leq2$, then \cite[Theorem 1]{pelaez2} yields
\begin{equation}\label{eqA1}
\begin{split}
    \int_\mathbb{D} A_1(f,z)^{p/2} dA\lambda(z) &\lesssim \sum_{j} MO_{\om,r}(f)(a_j)^p \omega(D_j)^{\frac{p}{2}} \int_\mathbb{D} \frac{\widehat{\omega}(z)^{-p/2} (1-|z|^2)^{p(\eta+3/2)} }{|1-\overline{a_j}z|^{p(\eta+2)}}\,d\lambda(z)\\
    &\lesssim \sum_{j} MO_{\om,r}(f)(a_j)^p \omega(D_j)^{\frac{p}{2}}  \int_\mathbb{D}\frac{(1-|z|^2)^{p(\eta+3/2)-2}}{|1-\overline{a_j}z|^{p(\eta+2)}\widehat{\omega}(z)^{p/2}}\,dA(z)\\
    &\lesssim \sum_{j} MO_{\om,r}(f)(a_j)^p \omega(D_j)^{\frac{p}{2}}\left(\int_0^{|a_j|} \dfrac{(1-s)^{p(\eta+\frac{3}{2})-1}}{(1-s)^{p(\eta+2)}\widehat{\omega}(s)^{\frac{p}{2}}}+1\right)\\
    &\asymp \sum_{j} MO_{\om,r}(f)(a_j)^p.
\end{split}
\end{equation}

If $2<p<\infty$, then for a sufficiently small $\varepsilon=\varepsilon(\om)>0$, \cite[(14)]{LiuRattya} yields $\om(D_j)\lesssim \whw(a_j)(1-|a_j|)$ and hence  H\"older's inequality, \cite[Lemma 10]{zhu2}, Fubini's theorem and \cite[Theorem 1]{pelaez2} yield
\begin{equation}\label{eqA11}
    \begin{split}
      \int_\mathbb{D} A_1(f,z)^{{\frac{p}{2}} } dA\lambda(z) &\asymp\int_\D \left(\sum_{j} MO_{\om,r}(f)(a_j)^2 \omega(D_j) \frac{\widehat{\omega}(z)^{-1} (1-|z|^2)^{2(\eta+3/2)}}{|1-\overline{a_j}z|^{2\eta+4}}\right)^\frac{p}{2}\,d\lambda(z)\\
      &\lesssim\int_\D \left(\sum_{j} MO_{\om,r}(f)(a_j)^p  \frac{\widehat{\omega}(z)^{-p/2} (1-|z|^2)^{p(\eta+3/2)}}{|1-\overline{a_j}z|^{\frac{p}{2}(2\eta+3-\varepsilon)}} \widehat{\omega}(a_j)^{\frac{p}{2}}\right)\\
      &\quad\cdot\left( \sum_{j} \frac{(1-|a_j|)^{\frac{p}{p-2}}}{|1-\overline{a_j}z|^{(1+\varepsilon)\frac{p}{p-2}}} \right)^{\frac{p-2}{2}}\,d\lambda(z)\\
      &\lesssim \int_\D \sum_{j} MO_{\om,r}(f)(a_j)^p  \frac{\widehat{\omega}(z)^{-p/2} (1-|z|^2)^{\frac{p}{2}(2\eta+3-\varepsilon)}}{|1-\overline{a_j}z|^{\frac{p}{2}(2\eta+3-\varepsilon)}} \widehat{\omega}(a_j)^{\frac{p}{2}}\,d\lambda(z)\\
      &\lesssim \sum_{j} MO_{\om,r}(f)(a_j)^p\widehat{\omega}(a_j)^{\frac{p}{2}}\sum_i \int_{D_i}\frac{\widehat{\omega}(z)^{-p/2} (1-|z|^2)^{\frac{p}{2}(2\eta+3-\varepsilon)}}{|1-\overline{a_j}z|^{\frac{p}{2}(2\eta+3-\varepsilon)}}\,d\lambda(z)\\
      &\lesssim\sum_{j} MO_{\om,r}(f)(a_j)^p\widehat{\omega}(a_j)^{\frac{p}{2}}\int_\mathbb{D} \frac{(1-|z|^2)^{\frac{p}{2}(2\eta+3-\varepsilon)-2}}{|1-\overline{a_j}z|^{{\frac{p}{2}(2\eta+3-\varepsilon)}}} \widehat{\omega}(z)^{-p/2} \, dA(z)\\
      &\lesssim \sum_{j} MO_{\om,r}(f)(a_j)^p.
    \end{split}
\end{equation}

Next, we proceed to estimate the second integral in \eqref{aim}. Lemma \ref{eq:weight1} and Lemma \ref{lemma2} yield
\begin{equation}\label{eqA2}
    \begin{split}
        &\quad\int_\D A_2(f,z)^{\frac{p}{2}}\,d\lambda(z)\asymp\int_{\D}\left(\sum_{j} |\widehat{f_{\om,r}}(a_j)-\widehat{f_{\om,r}}(z)|^2  |k_z^{\eta+2}(a_j)|^2\widehat{\omega}(a_j)(1-|a_j|^2)\right)^{\frac{p}{2}}\,d\lambda(z)\\
        &\lesssim\int_{\D}\left(\widehat{\omega}(z)^{-1} (1-|z|^2)^{2\eta+3-2\delta} N_p(f,z)^{{\frac{p}{2}}} \int_\mathbb{D} \frac{  (1+\beta(z,\zeta))^2 }{|1-\overline{z}\zeta|^{2(\eta-d+2)}} \dfrac{\widehat{\omega}(\zeta)}{1-|\zeta|} dA(\zeta)\right)^{\frac{p}{2}}\,d\lambda(z)\\
        &\lesssim\int_{\D}\left((1-|z|^2)^{2d-2\delta} N_p(f,z)^{2/p}\right)^{\frac{p}{2}}\,d\lambda(z)\\
        &\lesssim \sum_j MO_{\om,r}(f)(a_j)^p (1-|a_j|^2)^{\delta p} \int_\mathbb{D} \dfrac{(1-|z|^2)^{p(d-\delta)-2} }{|1-\overline{a_j}z|^{pd}} \,dA(z)\lesssim \sum_j MO_{\om,r}(f)(a_j)^p.
    \end{split}
\end{equation}
Finally, combining \eqref{eqA1}, \eqref{eqA11}, and \eqref{eqA2}, we prove that (ii) holds.\hfill$\Box$

To finish the proof of the main theorem, it remains to prove that (i) implies (iii). The method used here originates from a technical construction in \cite{isralowitz}. Before presenting the proof, let us recall the definition of the commutator on $L^2_\om.$ For an $f\in L^2_\om$, the commutator $[M_f, P_\om]:=M_f P_\om-P_\om M_f$. It is well-known that the study of $[M_f, P_\om]$ is equivalent to the simultaneous study of $H_f$ and $H_{\overline{f}}$, which can be partially explained by the identity
\begin{equation}\label{commutator}
    [M_f, P_\om]=H_f P_\om-(H_{\overline{f}} P_\om)^*.
\end{equation}

\vskip 3mm
\noindent
{\bf Proof of (i) $\Rightarrow$ (iii)}: Let $1\leq p<\infty$. Suppose $H_f$ and $H_{\overline{f}}$ are both in $S_p$.
Then the identity \eqref{commutator} is also in $S_p$ of $L^2_\om$ due to the boundedness of $P_\om$ on $L^2_\om$. Let now $\{e_j\}_{j=1}^\infty$ be an orthonormal basis for $L^2_\omega$, and let $\{a_j\}$ be a $r$-lattice of $\D$ for a certain $r=r(\om)>0$. Set
$$
h_j(z)=\omega(D_j)^{-1/2}\chi_{D_j}(z)\quad\text{and}\quad g_j(z)=\chi_{D_j}(z) [M_f,P_\omega]h_j(z)/ \|\chi_{D_j} [M_f,P_\omega]h_j\|_{L^2_\om}.
$$
Then it is easy to see that the following two linear operators
$$A(e_j)(z)=g_j(z)\quad \text{and}\quad  B(e_j)(z)=h_j(z),\quad j=1,2,\cdots,~~~z\in\D.$$
are bounded on $L^2_\om$. Therefore for an $f\in L^2_\om$,  $T:=A^*[M_f, P_\om] B\in S_p(L^2_\om)$ and furthermore
\begin{equation}\label{T}
\|T\|^p_{S_p}\lesssim\|[M_f, P_\om]\|^p_{S_p},\quad f\in L^2_\om.
\end{equation}
Moreover, a simple calculation gives
\begin{align*}
    \langle Te_j,e_j\rangle_{L^2_\om}&=\langle  [M_f,P_\omega]h_j,g_j\rangle_{L^2_\om}\\
 &=\int_\mathbb{D}  ([M_f,P_\omega]h_j)(z) \overline{g_j(z)}\om(z) dA(z)\\
&=\frac{1}{\|\chi_{D_j} [M_f,P_\omega]h_j\|_{L^2_\om}}\int_{D_j}  ([M_f,P_\omega]h_j)(z) \overline{ ([M_f,P_\omega]h_j)(z)}\om(z)\, dA(z)\\
&=\|\chi_{D_j} [M_f,P_\omega]h_j\|_{L^2_\om},\quad j=1,2,\cdots.
\end{align*}
This means that $T$ is a positive operator on $L^2_\om$, and hence $T^p\in S_1$ with $\|T\|^p_{S_p}=\|T^p\|_{S_1}$
by \cite[Lemma 1.25]{zhu2}. This together with \cite[Theorem 1.27 and Proposition 1.31]{zhu2} and \eqref{T} yields
$$
\sum_j \langle  [M_f,P_\omega]h_j,g_j\rangle_{L^2_\om}^p=\sum_j \langle Te_j,e_j\rangle_{L^2_\om}^p\leq \sum_j \langle T^p e_j,e_j\rangle_{L^2_\om} <\infty.
$$
Nevertheless, it is elementary to deduce
$$
\|\chi_{D_j} [M_f,P_\omega]h_j\|_{L^2_\om}=\left( \int_{D_j} \left| \int_{D_j} \frac{f(z)-f(\zeta)}{\omega(D_j)^{1/2}} B^\omega_z(\zeta)\om(\z)dA(\zeta)\right|^2 \om(z)\,dA(z)\right)^{\frac12},
$$
which together with Lemma~\ref{lemma5} implies that
$\sum_{j=1}^\infty MO_{\om,r}(f)(a_j)^p<\infty,$ and hence the assertion follows due to \eqref{aim1}. \hfill$\Box$

\end{document}